\theoremstyle{plain}
\newtheorem{thm}{Theorem}[section]
\newtheorem{cor}[thm]{Corollary}
\newtheorem{prop}[thm]{Proposition}
\newtheorem{lem}[thm]{Lemma}
\theoremstyle{definition}
\newtheorem{rmk}[thm]{Remark}
\newcommand {\PP}{\mathbb{P}}
\title{Ulrich bundles on ruled surfaces}
\author{Marian Aprodu, Laura Costa, Rosa Maria Mir\'o-Roig}
\date{\today}                                           
\address{Facultatea de Matematic\u a \c si Informatic\u a, Universitatea din Bucure\c sti, Str. Academiei 14, 010014 Bucure\c sti, ROMANIA \& Institutul de Matematic\u a "Simion Stoilow" al Academiei Rom\^ane, Calea Grivi\c tei 21, Sector 1, 010702 Bucure\c sti, ROMANIA}
\email{marian.aprodu@fmi.unibuc.ro \& marian.aprodu@imar.ro}
\address{Facultat de Matem\`atiques i Inform\`{a}tica,
Departament de Matem\`{a}tiques i Inform\`{a}tica, Gran Via de les Corts Catalanes
585, 08007 Barcelona, SPAIN } \email{costa@ub.edu}
\address{Facultat de Matem\`atiques i Inform\`{a}tica,
Departament de Matem\`{a}tiques i Inform\`{a}tica, Gran Via de les Corts Catalanes
585, 08007 Barcelona, SPAIN } \email{miro@ub.edu}
\thanks{Marian Aprodu thanks the Max Planck Institute for Mathematics Bonn for hospitality during the preparation of this work The second and third authors were partially supported by MTM2013-45075-P}
\begin{document}
\maketitle
\begin{abstract} In this short note, we study the existence problem for Ulrich bundles on ruled surfaces, focusing our attention on the smallest possible rank. We show that existence of Ulrich line bundles occurs if and only if the coefficient $\alpha$ of the minimal section in the numerical class of the polarization equals one. For other polarizations, we prove the existence of rank two Ulrich bundles.
\end{abstract}

\section{Introduction}

Let $(X,H)$ be a smooth complex projective variety of dimension $n$. An \emph{Ulrich bundle} on $X$ is a vector bundle $E$ which satisfies
$H^i (X, E(-iH))=0 \text{ for all } i>0$
and
$H^j (X, E(-(j+1)H)=0 \text { for all } j<n.$  Its pushforward to $\PP^m$ via the inclusion of $X$ given by the ample line bundle $H$ has a linear resolution or, equivalently, for any linear projection $X\to \mathbb P^n$, its pushforward is trivial (see \cite{ES03}, Proposition 2.1).

Ulrich bundles have made a first appearance in commutative algebra, being associated to maximal Cohen-Macaulay graded modules with  maximal number of generators, \cite{Ul84}. In algebraic geometry, their importance was underlined in \cite{Bea00} and \cite{ES03} where a relation between their existence and the representations of Cayley--Chow forms was found. They have also important applications in liaison theory, singularity theory, moduli spaces and Boij-S\"{o}derberg theory. For example, if $X$ admits an Ulrich bundle, then its cone of cohomology tables coincide with that of           $\PP^n$ \cite{ES11}.
In view of their importance, Eisenbud, Schreyer and Weyman formulated the problem of determining which varieties $X$ admits an Ulrich bundle and, if so, what is the smallest rank of an Ulrich bundle that they carry. So far few examples are known and we refer the reader to \cite{CH12}, \cite{CKM12}, \cite{CKM13}, \cite{CMR}, \cite{CMP},
\cite{ES03}, \cite{KM}, \cite{Mi}, \cite{MP} and \cite{MP2}
for references and further results.
 In this context the existence of undecomposable  rank 2 Ulrich bundles on a surfaces is a challenging problem and
 a particular attention is focused on the existence of special rank two Ulrich bundles, i.e. rank 2 0-regular vector bundles $E$ on $X$ such that $c_1(E)=K_X+3H$. The existence of a special Ulrich bundle implies that the associated Cayley--Chow form is represented as a linear pfaffian \cite{ES03}. Recently, this problem has been solved for general K3 surfaces \cite{AFO12}, \cite{CKM12}, for del Pezzo surfaces in \cite{MP2}, for ACM rational surfaces in \cite{MP} and for surfaces with $q=p_g=0$, \cite{Bea16}. We will focus here on surfaces with Kodaira dimension $-\infty$.
In this note, we study the existence problem for Ulrich bundles on ruled surfaces, which is a relevant case thanks to \cite{Kim16}. 
\medskip

The structure of the paper is the following. In Section \ref{sec:rank1} we analyze the existence of Ulrich line bundles. We show that existence occurs if and only if the coefficient $\alpha$ of the minimal section in the numerical class of the polarization equals one, Theorem \ref{thm:rank1}. In Section \ref{sec:rank2} we prove existence of rank two special Ulrich bundles with respect to arbitrary polarizations (Proposition \ref{prop:rank2alpha1}, Proposition \ref{prop:rank2alpha2} and  Theorem \ref{thm:rank2}).  The rank--two bundles that we construct are strictly semistable for $\alpha=1$ and stable for $\alpha\ge 2$, Remark \ref{rmk:stability}. For basic properties of Ulrich bundles, we refer to the literature included here, especially \cite{ES03}.

\medskip

\noindent
{\bf Notation.} $\pi:X\to C$ is a geometrically ruled surface of invariant $e>0$ over the genus--$g$ curve $C$. Fix $p\in C$ and denote $F$ the fibre of $\pi$ over $p$. Denote by $C_0$ the section of self--intersection $-e$. Write $\mathcal O_X(K_X)=\mathcal O_X(-2C_0+(2g-2-e)F)\otimes \pi^*\mathfrak k$, with $\mathfrak k\in \mathrm{Pic}^0(C)$. We shall work with a polarization $H_{\alpha}$ on $X$ of numerical class $H\equiv \alpha C_0+\beta F$ with $\alpha \ge 1$ and $\beta>\alpha e$.

\section{Ulrich line bundles}
\label{sec:rank1}

In this section, we analyze the existence of Ulrich line bundles. We note first that, if they exists, Ulrich line bundles come in pairs.
Indeed, if $\mathcal O_X(L)$ is an Ulrich line bundle on $X$ with respect to the very ample line bundle $\mathcal O_X(H_{\alpha})$ then
\[ \mathcal O_X(K_X+3H_{\alpha}-L)\]
is also an Ulrich line bundle on $X$. In fact, $\mathcal O_X(K_X+3H_{\alpha}-L)$ is an Ulrich line bundle if and only if for any $i \geq 0$ and $-2 \leq t \leq -1$, $H^i(\mathcal O_X(K_X+3H_{\alpha}-L-tH_{\alpha}))=0$. Hence the claim follows from the fact that since  $\mathcal O_X(L)$ is an Ulrich line bundle,  for any $i \geq 0$ and $-2 \leq t \leq -1$, $H^i(\mathcal O_X(L-tH_{\alpha}))=0$ and from the fact that from Serre's duality we have
\[H^i(\mathcal O_X(K_X+3H_{\alpha}-L-tH_{\alpha}))=H^{2-i}(\mathcal O_X(L+(t-3)H_{\alpha}))=0. \]

The main result of this section is the following.

\begin{thm}
\label{thm:rank1}
Let $X$ be  a geometrically ruled surface over a smooth curve $C$ of genus $g$ and with $e>0$. Let $H_{\alpha}=\alpha C_0+\beta F$ be any very ample divisor on $X$. Then, there are Ulrich line bundles with respect to $H_{\alpha}$ if and only if $\alpha=1$. In this case, there exists exactly two types of Ulrich line bundles on $X$, namely
\[ \mathcal O_X(L_1):= \mathcal O_X((2\beta+g-1-e)F)\otimes  \pi^*\mathcal{L}_1\]
and
\[
\mathcal O_X(L_2):= \mathcal O_X(K_X+3H_{\alpha}-L_1)=\mathcal O_X(C_0+(\beta+g-1)F)\otimes \pi^*\mathcal L_2
\]
with $\mathcal L_1,\mathcal L_2\in\mathrm{Pic}^0(C)$ general. 
\end{thm}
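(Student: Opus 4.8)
The plan is to reduce Ulrich-ness of a line bundle to a cohomology computation on the base curve. As recalled in the introduction, $\mathcal{O}_X(L)$ is Ulrich with respect to $H_\alpha$ if and only if the full cohomology of the two twists vanishes, i.e. $H^i(\mathcal{O}_X(L-H_\alpha))=0$ and $H^i(\mathcal{O}_X(L-2H_\alpha))=0$ for all $i$. Writing $X=\PP(\mathcal{E})$ with $\mathcal{E}$ normalized so that $\mathcal{O}_X(C_0)=\mathcal{O}_{\PP(\mathcal{E})}(1)$ and $\deg\mathcal{E}=-e$, I would express an arbitrary class as $mC_0+\pi^*\mathcal{N}$ with $\mathcal{N}\in\mathrm{Pic}(C)$, absorbing the fibre part into $\mathcal{N}$. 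Using the Leray spectral sequence together with $\pi_*\mathcal{O}_X(mC_0)=S^m\mathcal{E}$ and $R^1\pi_*\mathcal{O}_X(mC_0)=0$ for $m\ge 0$, the vanishing of both direct images for $m=-1$, and $\pi_*\mathcal{O}_X(mC_0)=0$, $R^1\pi_*\mathcal{O}_X(mC_0)\cong S^{-m-2}\mathcal{E}^\vee\otimes(\det\mathcal{E})^{-1}$ for $m\le -2$ by relative duality, I would rewrite each vanishing as the vanishing of the full cohomology on $C$ of an explicit twist of a symmetric power of $\mathcal{E}$ or of $\mathcal{E}^\vee$.

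The technical heart is to classify the pairs $(m,\mathcal{N})$ with $H^\bullet(X,\mathcal{O}_X(mC_0+\pi^*\mathcal{N}))=0$, and I claim this forces $m\in\{-2,-1,0\}$. For $m=-1$ the cohomology vanishes for every $\mathcal{N}$; for $m=0$ it reduces to $H^\bullet(C,\mathcal{N})=0$, which holds precisely when $\mathcal{N}$ is a general line bundle of degree $g-1$; and for $m=-2$ it reduces to $H^\bullet(C,(\det\mathcal{E})^{-1}\otimes\mathcal{N})=0$, which forces $\deg\mathcal{N}=g-1-e$ and $(\det\mathcal{E})^{-1}\otimes\mathcal{N}$ general. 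The remaining values are excluded using crucially that $e>0$ makes $\mathcal{E}$ unstable: the section $C_0$ provides a destabilizing sub-line-bundle $\mathcal{O}_C\hookrightarrow\mathcal{E}$ of degree $0$, with quotient of degree $-e$. For $m\ge 1$, taking symmetric powers gives $\mathcal{N}\hookrightarrow S^m\mathcal{E}\otimes\mathcal{N}$ and a quotient line bundle of degree $\deg\mathcal{N}-me$; vanishing of $H^0$ forces $\deg\mathcal{N}\le g-1$ while vanishing of $H^1$ forces $\deg\mathcal{N}\ge g-1+me$, which is impossible. For $m\le -3$ the analogous sub- and quotient line bundles of $S^{-m-2}\mathcal{E}^\vee\otimes(\det\mathcal{E})^{-1}\otimes\mathcal{N}$ yield the incompatible constraints
\[
g-1-e\ \le\ \deg\mathcal{N}\ \le\ g-1-(-m-1)e .
\]
I expect this classification to be the main obstacle, since it is exactly where the hypothesis $e>0$ enters and where one must control higher symmetric powers of $\mathcal{E}$ and of $\mathcal{E}^\vee$ through their Harder--Narasimhan sub- and quotient line bundles.

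Finally I would feed this back into the two Ulrich twists. Writing $L\equiv aC_0+\pi^*\mathcal{M}$, the $C_0$-coefficients of $L-H_\alpha$ and $L-2H_\alpha$ are $m_1=a-\alpha$ and $m_2=a-2\alpha$, and the associated line bundles on $C$ satisfy $\deg\mathcal{N}_2=\deg\mathcal{N}_1-\beta$. Both coefficients must lie in $\{-2,-1,0\}$, and since $m_1-m_2=\alpha$ is at most the diameter $2$ of this set, only $\alpha\in\{1,2\}$ can occur. For $\alpha=2$ the sole option $(m_1,m_2)=(0,-2)$ forces simultaneously $\deg\mathcal{N}_1=g-1$ and $\deg\mathcal{N}_2=g-1-e$, hence $\beta=\deg\mathcal{N}_1-\deg\mathcal{N}_2=e$, contradicting $\beta>\alpha e=2e$; so $\alpha=1$. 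For $\alpha=1$ the two admissible pairs are $(m_1,m_2)=(0,-1)$ and $(-1,-2)$, i.e. $a=1$ and $a=0$. The case $a=1$ imposes only the condition $m_1=0$, pinning $\deg\mathcal{M}=\beta+g-1$ and giving $\mathcal{O}_X(L_2)=\mathcal{O}_X(C_0+(\beta+g-1)F)\otimes\pi^*\mathcal{L}_2$; the case $a=0$ imposes only $m_2=-2$, pinning $\deg\mathcal{M}=2\beta+g-1-e$ and giving $\mathcal{O}_X(L_1)=\mathcal{O}_X((2\beta+g-1-e)F)\otimes\pi^*\mathcal{L}_1$. In each case the line bundle of degree $g-1$ on $C$ that must have no cohomology forces $\mathcal{L}_1,\mathcal{L}_2\in\mathrm{Pic}^0(C)$ to be general, and since the classification is an equivalence these are exactly the Ulrich line bundles; they are interchanged by $L\mapsto K_X+3H_\alpha-L$, as in the statement.
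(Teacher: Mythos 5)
Your proposal is correct, but it follows a genuinely different route from the paper's own proof. You classify \emph{all} line bundles $\mathcal O_X(mC_0+\pi^*\mathcal N)$ with totally vanishing cohomology by pushing forward to $C$: Leray plus $\pi_*\mathcal O_X(mC_0)=S^m\mathcal E$, relative duality for $m\le -2$, and the destabilizing sequence $0\to\mathcal O_C\to\mathcal E\to\det\mathcal E\to 0$ (which exists precisely because $\mathcal E$ is normalized, and whose symmetric powers give the sub/quotient line bundles you need), arriving at $m\in\{-2,-1,0\}$ with the stated degree and genericity conditions; the Ulrich conditions then become a short combinatorial check on the pair $(a-\alpha,\,a-2\alpha)$, with $\beta>\alpha e$ killing the case $\alpha=2$. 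The paper instead never leaves the surface: for the forward direction it exploits the involution $L\mapsto K_X+3H_\alpha-L$ on Ulrich line bundles, writes the two Euler characteristics via Riemann--Roch in the factored form $\chi(\mathcal O_X(D_i))=(a_i+1)\bigl(b_i+1-g-\tfrac{a_ie}{2}\bigr)=0$, and eliminates cases using the sum constraints $a_1+a_2=\alpha-2$, $b_1+b_2=\beta+2g-2-e$ together with an elementary $h^0$ section argument (where $e\ge 1$ enters); the converse is a direct verification that $\chi=0$ and $h^0=h^2=0$ for both twists of the candidate $L_1$. Your approach is heavier on machinery (relative duality, cohomology and base change, filtrations of symmetric powers) but buys a stronger intermediate statement: it makes the classification an equivalence in one pass, so the ``exactly two types'' assertion and the necessity of genericity of $\mathcal L_1,\mathcal L_2$ fall out uniformly, whereas in the paper the uniqueness of the numerical types emerges from the case analysis and the genericity only in the converse direction. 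The paper's argument, by contrast, is more elementary and shorter, using nothing beyond Riemann--Roch and Serre duality on $X$. One small point to make explicit if you write this up: justify that the section $\mathcal O_C\to\mathcal E$ is nowhere vanishing (this follows from normalization of $\mathcal E$, otherwise it would factor through $\mathcal E\otimes\mathcal L^{-1}$ with $\deg\mathcal L>0$), since you need the quotient $\det\mathcal E$ to be locally free of degree $-e$ before taking symmetric powers.
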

\begin{proof}
 Let $\mathcal O_X(L_1)$ be an Ulrich line bundle on $X$. Then its pair $\mathcal O_X(L_2):=\mathcal O_X(K_X+3H_{\alpha}-L_1)$ is also Ulrich. In particular, this means that there exists two divisors $D_i=L_i-L\equiv a_iC_0+b_iF$, i.e. $\mathcal O_X(D_i)=\mathcal O_X(a_iC_0+b_iF)\otimes \pi^*\mathcal{L}_i$ for $a_i,b_i\in \mathbb Z$ and $\mathcal L_i\in\mathrm{Pic}^0(C)$,  such that $\chi (\mathcal O_X(D_i))=0$, $i\in\{1,2\}$.
 By Riemann--Roch's theorem this means that
\[ 0=\chi(\mathcal O_X(D_i))=1-g-\frac{D_i\cdot K_X}{2}+\frac{D_i^2}{2}=(a_i+1)(b_i+1-g-\frac{a_ie}{2}). \]
Therefore $a_i=-1$ or $b_i=g-1+\frac{a_ie}{2}$ for $i\in\{1,2\}$. Note that $a_1+a_2=\alpha-2$ and $b_1+b_2=\beta+2g-2-e$.

\medskip

We claim that the equations $b_i=g-1+\frac{a_ie}{2}$ cannot be satisfied simultaneously. Indeed,
if the two conditions are satisfied in the same time, then, by adding them up and replacing $b_1+b_2$ and $a_1+a_2$ we obtain $\beta=\alpha e$ which contradicts the very ampleness of $H_{\alpha}$.

On the other hand, since $a_1+a_2=\alpha -2$, $a_1$ and $a_2$ cannot be simultaneously $-1$, for that would imply $\alpha=0$, contradiction.

Hence either $a_1=-1$ and $b_2=g-1+\frac{a_2e}{2}$ or $a_2=-1$ and $b_1=g-1+\frac{a_2e}{2}$.

Assume that $a_1=-1$ and $b_2=g-1+\frac{a_2e}{2}$ (the second case is similar, due to the symmetry of the situation). Then $a_2=\alpha-1$ and $b_2=g-1+\frac{(\alpha-1)e}{2}$, hence
\[
\mathcal O_X(L_2-H_{\alpha})=\mathcal O_X\left((\alpha-1)C_0+\left(g-1+\frac{e(\alpha-1)}{2}\right)F\right)\otimes \pi^*\mathcal L_2.
\]

Since $e\ge 1$, $\alpha\ge 1$, and $h^0(\mathcal O_X(L_2-H_{\alpha}))=0$ it follows that $\alpha =1$. Indeed, if $\alpha \ge 2$, the degree of $\mathcal O_C\left(\left(g-1+\frac{e(\alpha-1)}{2}\right)p\right)\otimes \mathcal L_2$ is at least $g$ which means that $\mathcal O_X\left(\left(g-1+\frac{e(\alpha-1)}{2}\right)F\right)\otimes \pi^*\mathcal L_2$ has sections, implying also that the line bundle
\[
\mathcal O_X\left((\alpha-1)C_0+\left(g-1+\frac{e(\alpha-1)}{2}\right)F\right)\otimes \pi^*\mathcal L_2
\]
has sections. In conclusion, $\alpha =1$, which implies $\mathcal O_X(L_1)= \mathcal O_X((2\beta+g-1-e)F)\otimes  \pi^*\mathcal{L}_1$.

 \medskip

Conversely, assume that $\alpha=1$ and we prove that
\[
\mathcal O_X(L_1):= \mathcal O_X((2\beta+g-1-e)F)\otimes  \pi^*\mathcal{L}_1,
\]
 with $\mathcal L_1\in\mathrm{Pic}^0(C)$ general, is an Ulrich bundle. To this end, observe that
\[
H_{\alpha}\cdot L_1=\frac{H_{\alpha}\cdot(K_X+3H_{\alpha})}{2}
\]
and that by Riemann--Roch's Theorem
\[ \chi (\mathcal O_X(L_1-H_{\alpha}))= (1-g)-\frac{1}{2}(L_1-H_{\alpha})\cdot K_X+\frac{1}{2}(L_1-H_{\alpha})^2=0. \]
These two facts also imply that $\chi (\mathcal O_X(L_1-2H_{\alpha}))=0$ because
\[
\chi (\mathcal O_X(L_1-2H_{\alpha}))= \chi (\mathcal O_X(L_1-H_{\alpha}))-H_{\alpha}\cdot L_1+\frac{H_{\alpha}\cdot (K_X+3H_{\alpha})}{2}.
\]
On the other hand, $\mathcal O_X(L_1)$ is an Ulrich line bundle if and only if for any $i \geq 0$ and any $-2 \leq t \leq -1$, we have $H^i(\mathcal O_X (L_1-tH_{\alpha}))=0$. Notice that the coefficient of $C_0$ in $L_1-H_{\alpha}$ equals $-1$, hence $h^0(\mathcal O_X(L_1-H_{\alpha}))=0$ which also implies that $h^0(\mathcal O_X(L_1-2H_{\alpha}))=0$. The coefficient of $C_0$ in $H_{\alpha}-L_1+K_X$ equals also $-1$ and hence  $h^2(\mathcal O_X(L_1-H_{\alpha}))= h^0(\mathcal O_X(H_{\alpha}-L_1+K_X))=0$. These vanishing together with the fact that $\chi (\mathcal O_X(L_1-H_{\alpha}))=0$ give us $h^1(\mathcal O_X(L_1-H_{\alpha}))=0$.  Finally, $\mathcal O_X(2H_{\alpha}-L_1+K_X)=\mathcal O_X((g-1)F)\otimes\mathcal \pi^*\mathcal L_2$ with $\mathcal L_2\in\mathrm{Pic}^0(C)$ general and hence
\[
h^2(\mathcal O_X(L_1-2H_{\alpha}))= h^0(\mathcal O_X(2H_{\alpha}-L_1+K_X))= h^0(\mathcal O_C((g-1)p)\otimes \mathcal L_2)=0.
\]

Since  $\chi (\mathcal O_X(L_1-2H_{\alpha}))=0$ we also have $h^1(\mathcal O_X(L_1-2H_{\alpha}))= 0$. Putting altogether we get that $\mathcal O_X(L_1)$ is an Ulrich line bundle and hence the same holds for $\mathcal O_X(L_2)$.
\end{proof}

\section{Rank-two Ulrich bundles}
\label{sec:rank2}

The goal of this section is to prove the existence of special rank 2 Ulrich bundles with respect to any very ample divisor $H_{\alpha}=\alpha C_0+ \beta F$ on $X$. To this end, we will start analyzing separately the cases $\alpha=1$ and $\alpha=2$.

\begin{prop}
\label{prop:rank2alpha1}
Let $X$ be  a geometrically ruled surface over a smooth curve $C$ of genus $g$ and with $e>0$.
Let $H=C_0+\beta F$ be a very ample divisor on $X$.
Then there exists a family of dimension $2 \beta -e+4g-3$  of indecomposable rank-two simple strictly semi-stable Ulrich bundles on $X$.
\end{prop}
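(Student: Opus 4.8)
The plan is to realize the desired rank-two bundles as nonsplit extensions of the two Ulrich line bundles produced in Theorem \ref{thm:rank1}. Since $\alpha=1$, that theorem supplies two $g$-dimensional families of Ulrich line bundles, $\mathcal{O}_X(L_1)=\mathcal{O}_X((2\beta+g-1-e)F)\otimes\pi^*\mathcal{L}_1$ and $\mathcal{O}_X(L_2)=\mathcal{O}_X(C_0+(\beta+g-1)F)\otimes\pi^*\mathcal{L}_2$, with $\mathcal{L}_1,\mathcal{L}_2\in\mathrm{Pic}^0(C)$ general; both have the same $H$-slope $\mu=\tfrac12(K_X+3H)\cdot H$. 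I would fix such $\mathcal{L}_1,\mathcal{L}_2$ and study the extensions
\begin{equation*}
0\longrightarrow\mathcal{O}_X(L_2)\longrightarrow E\longrightarrow\mathcal{O}_X(L_1)\longrightarrow 0,
\end{equation*}
classified by $\mathrm{Ext}^1(\mathcal{O}_X(L_1),\mathcal{O}_X(L_2))=H^1(\mathcal{O}_X(L_2-L_1))$. Every such $E$ is locally free of rank two.

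First I would record the relevant cohomology of $\mathcal{O}_X(L_2-L_1)$, whose numerical class is $C_0+(e-\beta)F$. Since $\beta>e$, pushing forward to $C$ (so that the two graded pieces on $C$ have negative degrees) gives $h^0(\mathcal{O}_X(L_2-L_1))=0$, while Serre duality together with the negative $C_0$-coefficient of $K_X-(L_2-L_1)$ gives $h^2(\mathcal{O}_X(L_2-L_1))=0$; Riemann--Roch (in the form used in Theorem \ref{thm:rank1}) then yields $h^1(\mathcal{O}_X(L_2-L_1))=2\beta-e+2g-2>0$, so nonsplit extensions exist. That $E$ is Ulrich is immediate: twisting the defining sequence by $-H$ and $-2H$ and using that $\mathcal{O}_X(L_1)$ and $\mathcal{O}_X(L_2)$ are Ulrich (so all cohomology of $\mathcal{O}_X(L_i-H)$ and $\mathcal{O}_X(L_i-2H)$ vanishes) forces $H^\bullet(E(-H))=H^\bullet(E(-2H))=0$.

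Next I would establish simplicity and strict semistability. Both $H^0(\mathcal{O}_X(L_2-L_1))=\mathrm{Hom}(\mathcal{O}_X(L_1),\mathcal{O}_X(L_2))$ and $H^0(\mathcal{O}_X(L_1-L_2))=\mathrm{Hom}(\mathcal{O}_X(L_2),\mathcal{O}_X(L_1))$ vanish (the first as above, the second because $L_1-L_2$ has $C_0$-coefficient $-1$). Applying $\mathrm{Hom}(-,\mathcal{O}_X(L_1))$ to the sequence gives $\mathrm{Hom}(E,\mathcal{O}_X(L_1))\cong\mathbb{C}$, while applying $\mathrm{Hom}(-,\mathcal{O}_X(L_2))$ shows that the connecting map sends $\mathrm{id}_{\mathcal{O}_X(L_2)}$ to the nonzero extension class, whence $\mathrm{Hom}(E,\mathcal{O}_X(L_2))=0$; feeding these into $\mathrm{Hom}(E,-)$ of the sequence yields $\hom(E,E)=1$, so $E$ is simple, in particular indecomposable. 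For semistability, any saturated sub-line-bundle $M\subset E$ either maps nonzero to $\mathcal{O}_X(L_1)$, forcing $L_1-M$ effective and $M\cdot H\le\mu$, or factors through $\mathcal{O}_X(L_2)$, again forcing $M\cdot H\le\mu$; thus $E$ is $\mu$-semistable, and it is not stable because $\mathcal{O}_X(L_2)\subset E$ has slope exactly $\mu$.

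Finally, for the dimension I would let $\mathcal{L}_1,\mathcal{L}_2$ vary over the dense open loci where the two line bundles remain Ulrich and let the extension class vary in $\mathbb{P}(H^1(\mathcal{O}_X(L_2-L_1)))$, obtaining a parameter space of dimension $g+g+(2\beta-e+2g-2)-1=2\beta-e+4g-3$. To see that this is the dimension of the family of bundles, I would note that the Jordan--H\"older factors $\{\mathcal{O}_X(L_1),\mathcal{O}_X(L_2)\}$ are recovered from $E$ (they are distinguished by their $C_0$-coefficients), so $\mathcal{L}_1,\mathcal{L}_2$ are determined by $E$, and that for fixed $L_1,L_2$ simplicity forces proportional extension classes to be identified and non-proportional ones to give non-isomorphic bundles; hence the parametrization is generically injective. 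The main technical point to pin down is the package of vanishings $h^0=h^2=0$ together with the positivity of $h^1$, all of which hinge on $\beta>e$ and on the behaviour of the $C_0$-coefficient; the rest is formal once Theorem \ref{thm:rank1} is in hand.
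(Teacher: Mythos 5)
Your proposal is correct and follows essentially the same route as the paper: both realize $E$ as a nonsplit extension of the two Ulrich line bundles from Theorem \ref{thm:rank1}, compute $\dim\mathrm{Ext}^1=h^1(\mathcal O_X(L_2-L_1))=2\beta-e+2g-2$ via Riemann--Roch and the vanishing of $h^0$ and $h^2$, deduce Ulrichness from the extension, simplicity from the absence of maps between the two line bundles, and strict semistability from the slope equality, then count $2g$ parameters for $(\mathcal L_1,\mathcal L_2)$ plus the projectivized extension space to get $2\beta-e+4g-3$. The only differences are cosmetic: you verify semistability directly via sub-line bundles and spell out the generic injectivity of the parametrization, where the paper simply cites semistability of Ulrich bundles and the fact that $h^0(E\otimes\mathcal O_X(L_2)^{-1})=1$.
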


\proof
It follows from Theorem \ref{thm:rank1} that \[ \mathcal O_X(L_1):= \mathcal O_X((2\beta-1-e)F)\otimes \pi^*\mathcal L_1 \quad \mbox{and} \quad
 \mathcal O_X(L_2):= \mathcal O_X(K_X+3H-L_1)\otimes \pi^*\mathcal L_2 \]
 are Ulrich line bundles on $X$ for general $\mathcal L_1 \in Pic^0(C)$ and $\mathcal L_2=\mathfrak k\otimes \mathcal L_1^{-1}$.
Moreover,  $h^0(\mathcal O_X(L_2-L_1))=h^2(\mathcal O_X(L_2-L_1))=0$ and hence
\[ \dim \mathrm{Ext}^1(\mathcal O_X(L_1),\mathcal O_X(L_2))= h^1(\mathcal O_X(L_2-L_1))=- \chi (\mathcal O_X(L_2-L_1))= 2 \beta -e+2(g-1) >0. \]
Therefore we can consider rank two vector bundles $E$ on $X$ given by a non trivial extension of the following type
\[
0\to \mathcal O_X( C_0+(\beta-1)F)\otimes \pi^*\mathcal L_2 \to E \to \mathcal O_X((2 \beta -1-e)F)\otimes \pi^*\mathcal L_1  \to 0
\]
Since $\mathcal O_X(L_1)\otimes \pi^*\mathcal L_1$ and  $ \mathcal O_X(L_2)\otimes \pi^*\mathcal L_2$ are Ulrich line bundles, $E$ is also an Ulrich  bundle. Moreover it is simple and in particular indecomposable due to the fact that there are no nonzero morphisms from $L_i$ to $L_j$ for $i\ne j$. Finally notice that since Ulrich bundles are semi--stable and the slope of $L_2$ and $E$ coincide, it is strictly semi-stable.

Finally, since $h^0(E(-C_0-(\beta-1)F))=1$, we have a  family of dimension $2 \beta -e+4g-3 $  of indecomposable rank-two simple strictly semi-stable Ulrich bundles on~$X$.
\endproof

Before we  pass  to the case where the coefficient of $C_0$ in the polarization $H_{\alpha}$ is equal or greater than two, we will prove the following result that can be stated in a more general setting:

\begin{lem}
\label{lem:initialized}
Let $S$ be a surface, $H$ be a very ample divisor on $S$, $E$ a rank-two vector bundle on $S$ with $\mathrm{det}(E)=\mathcal O_S(K_S+3H)$ and $c_2(E)=\frac{1}{2}H\cdot (5H+3K_S)+2\chi(\mathcal O_S)$. Then $E$ is Ulrich if and only if $E$ is initialized i.e. $H^0(E(-H))=0$.
\end{lem}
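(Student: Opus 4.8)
The plan is to exploit the fact that a rank--two bundle is, up to a twist, self--dual, which collapses the four defining vanishing conditions of an Ulrich bundle into two pairs. For a surface $S$ of dimension $n=2$ with the polarization $H$, recall that $E$ is Ulrich precisely when $H^0(E(-H))=0$, $H^1(E(-H))=0$, $H^1(E(-2H))=0$ and $H^2(E(-2H))=0$. The forward implication (Ulrich $\Rightarrow$ initialized) is immediate, since $H^0(E(-H))=0$ is literally one of the four conditions. So the content lies in the converse.

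First I would record the self--duality. Since $E$ has rank two, $E^\vee\cong E\otimes\det(E)^{-1}=E(-K_S-3H)$. Feeding this into Serre duality gives, for every $i$,
\[
H^i(E(-H))^\vee\cong H^{2-i}\bigl(E^\vee(H)\otimes K_S\bigr)\cong H^{2-i}(E(-2H)).
\]
Thus the cohomology of $E(-H)$ and of $E(-2H)$ are dual to one another term by term: $H^0(E(-H))$ pairs with $H^2(E(-2H))$ and $H^1(E(-H))$ pairs with $H^1(E(-2H))$. Consequently the four Ulrich conditions reduce to the two independent requirements $H^0(E(-H))=0$ and $H^1(E(-H))=0$.

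Assuming now that $E$ is initialized, the first of these holds by hypothesis, and by the duality above $H^2(E(-2H))=0$ as well. It remains to kill $H^1(E(-H))$. Here I would compute the Euler characteristic by Riemann--Roch for a rank--two bundle,
\[
\chi(E(-H))=2\chi(\mathcal O_S)+\tfrac{1}{2}\,c_1(E(-H))\cdot\bigl(c_1(E(-H))-K_S\bigr)-c_2(E(-H)),
\]
with $c_1(E(-H))=K_S+H$ and $c_2(E(-H))=c_2(E)-(K_S+3H)\cdot H+H^2$. Substituting the prescribed value $c_2(E)=\tfrac12 H\cdot(5H+3K_S)+2\chi(\mathcal O_S)$ makes every term cancel, yielding $\chi(E(-H))=0$; this is exactly where the two numerical hypotheses on $\det(E)$ and $c_2(E)$ enter.

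To finish I would dispose of $H^2(E(-H))$: since $H$ is very ample, multiplication by a section of $\mathcal O_S(H)$ gives an inclusion $E(-2H)\hookrightarrow E(-H)$, whence $H^0(E(-2H))\hookrightarrow H^0(E(-H))=0$, and then the self--duality gives $H^2(E(-H))\cong H^0(E(-2H))^\vee=0$. Combining $h^0(E(-H))=h^2(E(-H))=0$ with $\chi(E(-H))=0$ forces $h^1(E(-H))=0$, and one last application of duality yields $H^1(E(-2H))=0$. All four Ulrich conditions now hold, so $E$ is Ulrich. The only genuinely computational step is the Riemann--Roch cancellation, which I expect to be the main (though routine) obstacle; the conceptual heart is the rank--two self--duality that reduces the whole statement to a single Euler--characteristic computation.
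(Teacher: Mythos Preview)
Your argument is correct and follows essentially the same route as the paper: rank--two self--duality combined with Serre duality identifies the cohomology of $E(-H)$ with that of $E(-2H)$, and then a Riemann--Roch computation shows the Euler characteristic vanishes. The only minor difference is that the paper verifies $\chi(E(-H))=0$ and $\chi(E(-2H))=0$ separately, whereas you observe that the full duality $H^i(E(-H))^\vee\cong H^{2-i}(E(-2H))$ makes the second computation redundant; this is a slight streamlining but not a different approach.
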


\proof
If $E$ is Ulrich, then by definition $H^0(E(-H))=0$ and hence we only have to prove the converse. For surfaces, Ulrichness of $E$ reduces to
\[
h^0(E(-H))=h^1(E(-H))=h^1(E(-2H))=h^2(E(-2H))=0.
\]

On the other hand, $E^*\cong E(-K_S-3H)$ and hence Serre's duality gives $h^2(E(-2H))=h^0(E^*(K_S+2H))=h^0(E(-H))$. Note also that the vanishing of $h^0(E(-H))$ implies the vanishing of $h^0(E(-2H))$ ($H^0(E(-2H))\subset H^0(E(-H))$) and of $h^2(E(-H))$ by Serre duality.

The vanishing of $h^1(E(-H))$ and of $h^1(E(-2H))$ follows if we prove the vanishing of $\chi(E(-H))$ and of $\chi (E(-2H))$ respectively.

Notice that  $\mathrm{det}(E(-H))=K_S+H$ and $\mathrm{det}(E(-2H))=K_S-H$. On the other hand, $c_2(E(-H))=c_2(E)-c_1(E)\cdot H+H^2$ and $c_2(E(-2H))=c_2(E)-2c_1(E)\cdot L+4H^2$. Thus, from Riemann-Roch
\[
\chi(E(-H))=2\chi(\mathcal O_S)+\frac{1}{2}c_1(E(-H))\cdot (c_1(E(-H))-K_S)-c_2(E(-H))=0
\]
and similarly $\chi(E(-2H))=0$.
\endproof

Let us now consider the case where the coefficient of $C_0$ in the polarization $H_{\alpha}$ is equal to two:

\begin{prop}
\label{prop:rank2alpha2}
Let $X$ be  a geometrically ruled surface over a smooth curve $C$ of genus $g$ and with $e>0$. Let $H=2 C_0+\beta F$ be any very ample divisor on $X$. Then, there are special Ulrich bundles with respect to $H$ given by extensions
\[
0\to \mathcal O_X(2C_0+(\beta+g-1)F)\otimes \pi^*\mathcal L_1\to E
\to \mathcal I_Z(2C_0+(2\beta+g-1-e)F)\otimes\pi^*\mathcal L_2\to 0
\]
where $\mathcal L_1\in \mathrm{Pic}^0(C)$ is a general line bundle, $\mathcal L_2=\mathfrak k\otimes \mathcal L_1^{-1}$, and $Z$ a general zero--dimensional subscheme of $X$ with $\ell(Z)=\beta-e$.
\end{prop}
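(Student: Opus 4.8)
The plan is to reduce everything to Lemma~\ref{lem:initialized}, so that verifying the Ulrich property amounts to checking two numerical identities together with the single vanishing $H^0(E(-H))=0$. Write $A=2C_0+(\beta+g-1)F$ and $B=2C_0+(2\beta+g-1-e)F$ for the numerical classes of the sub- and of the locally free part of the quotient in the extension, so that $c_1(E)\equiv A+B$ and, since $Z$ has codimension two, $c_2(E)=A\cdot B+\ell(Z)$. A direct intersection computation (using $C_0^2=-e$, $C_0\cdot F=1$, $F^2=0$) gives $A+B\equiv 4C_0+(3\beta+2g-2-e)F\equiv K_X+3H$, and including the $\mathrm{Pic}^0(C)$-parts one checks $\mathcal L_1\otimes\mathcal L_2=\mathfrak k$, so that indeed $\det(E)=\mathcal O_X(K_X+3H)$. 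Likewise $A\cdot B=6\beta+4g-4-6e$, whence with $\ell(Z)=\beta-e$ one finds $c_2(E)=7\beta+4g-4-7e$, which is exactly the value $\tfrac12 H\cdot(5H+3K_X)+2\chi(\mathcal O_X)$ demanded by Lemma~\ref{lem:initialized}. Thus, once $E$ is shown to be a locally free initialized sheaf, it is automatically a special Ulrich bundle.

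Next I would produce a genuine rank-two \emph{bundle} from the extension. The extensions are parametrized by $\mathrm{Ext}^1(\mathcal I_Z\otimes B,A)\cong\mathrm{Ext}^1(\mathcal I_Z,A\otimes B^{-1})$, and by the local-to-global spectral sequence an extension class yields a locally free $E$ precisely when its image in $H^0(\mathcal{E}xt^1(\mathcal I_Z,A\otimes B^{-1}))$ is nowhere zero on $Z$; the obstruction to realizing such a class lies in $H^2(A\otimes B^{-1})$. Since $A\otimes B^{-1}\equiv-(\beta-e)F$, Serre duality gives $H^2(A\otimes B^{-1})\cong H^0(K_X+B\otimes A^{-1})^{\ast}$ with $K_X+B\otimes A^{-1}\equiv-2C_0+(\beta+2g-2-2e)F$; the coefficient of $C_0$ is negative, so $\pi_{\ast}$ of this line bundle vanishes and hence $H^2(A\otimes B^{-1})=0$. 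Consequently the map to $H^0(\mathcal{E}xt^1)$ is surjective, and for $Z$ a general (reduced) set of $\beta-e$ points we may choose an extension class that generates $\mathcal{E}xt^1$ at every point of $Z$, producing a locally free $E$.

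It remains to verify that $E$ is initialized, and this is the step where genericity of $Z$ and of $\mathcal L_1$ is really used. Twisting the extension by $-H$ sandwiches $H^0(E(-H))$ between $H^0(A-H)$ and $H^0(\mathcal I_Z(B-H))$. Here $A-H\equiv(g-1)F$ with $\mathrm{Pic}^0$-part $\mathcal L_1$, so $H^0(A-H)=H^0(C,\mathcal O_C((g-1)p)\otimes\mathcal L_1)=0$ because a general line bundle of degree $g-1$ on $C$ is non-effective. For the quotient the key point is that $B-H\equiv(\beta+g-1-e)F$ is a pullback $\pi^{\ast}(\mathcal O_C((\beta+g-1-e)p)\otimes\mathcal L_2)$, so each of its sections is constant along fibres and its zero locus is a union of fibres; a section vanishes on $Z$ if and only if the corresponding section on $C$ vanishes at the $\beta-e$ image points $\pi(Z)$. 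For $Z$ general these points are general on $C$, and subtracting them from $\mathcal O_C((\beta+g-1-e)p)\otimes\mathcal L_2$ — a line bundle of degree $\beta+g-1-e$ with $h^1=0$, hence $h^0=\beta-e$, for general $\mathcal L_2$ — leaves a general line bundle of degree $g-1$, which has no sections. Hence $H^0(\mathcal I_Z(B-H))=0$, so $H^0(E(-H))=0$ and Lemma~\ref{lem:initialized} concludes the argument.

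I expect the main obstacle to be precisely this last vanishing: the quotient line bundle $B-H$ does carry sections, so initializing cannot be read off the two boundary terms naively. The crux is to exploit that these sections are pulled back from $C$ — so that imposing the points of $Z$ reduces to imposing general points on $C$ — together with the numerical coincidence $h^0(B-H)=\ell(Z)=\beta-e$ and the non-effectivity of a general degree-$(g-1)$ line bundle. One should also check that the genericity requirements on $\mathcal L_1$ and on $\mathcal L_2=\mathfrak k\otimes\mathcal L_1^{-1}$ are simultaneously achievable, which holds since each fails only on a proper closed subset of $\mathrm{Pic}^0(C)$.
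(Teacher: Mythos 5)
Your proof is correct and follows essentially the same route as the paper: reduction to Lemma \ref{lem:initialized} via the Chern class computations, then the initialization $H^0(E(-H))=0$ obtained by pushing everything down to $C$, where the key point (identical in both arguments) is that imposing $Z$ subtracts $\beta-e$ general points from the degree-$(\beta+g-1-e)$ pullback line bundle, leaving a general line bundle of degree $g-1$ with no sections. Your explicit verification that locally free extensions exist — via $H^2(A\otimes B^{-1})=0$ and the local-to-global Ext sequence — is a step the paper leaves implicit in this case (the Cayley--Bacharach condition is vacuous here because $K_X+B-A$ has $C_0$-coefficient $-2$, hence no sections), so it is a welcome extra check rather than a genuinely different approach.
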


\proof
The bundles $E$ given by extensions as above fall under the statement of Lemma \ref{lem:initialized}: $c_1(E)\equiv K_X+3H$, $c_2(E)=4(g-1)+7\beta-7e$ and the line bundle $\mathcal L_2$ is determined by the condition $\mathrm{det}(E)=\mathcal O_X(K_X+3H)$. Hence, they are Ulrich if and only if they are initialized.

We have the short exact sequence
\[
0\to H^0(\mathcal O_X((g-1)F)\otimes \pi^*\mathcal L_1)\to H^0(E(-H))
\to H^0(\mathcal I_Z((\beta+g-1-e)F)\otimes\pi^*\mathcal L_2).
\]

For $\mathcal L_1$ general, $H^0(\mathcal O_X((g-1)F)\otimes \pi^*\mathcal L_1)=H^0(\mathcal O_C((g-1)p)\otimes \mathcal L_1)=0$, as $\mathcal O_C((g-1)p)\otimes \mathcal L_1$ is a general bundle in $\mathrm{Pic}^{g-1}(C)$. For $Z$ general $\pi_*\mathcal I_Z=\mathcal O_C(-(p_1+\ldots+p_{\beta-e}))$ with $p_i\in C$ distinct points and, since $\mathcal L_2=\mathfrak k\otimes \mathcal L_1^{-1}$ is also general, $\pi_*(\mathcal I_Z((\beta+g-1-e)F)\otimes\pi^*\mathcal L_2)$ is a general line bundle of degree $g-1$ and hence has no sections. It follows that $E$ is initialized.
\endproof

Finally, we consider the remaining cases.

\begin{thm}
\label{thm:rank2}
Let $X$ be  a geometrically ruled surface over a smooth curve $C$ of genus $g$ and with $e>0$. Let $H_\alpha=\alpha C_0+\beta F$ with $\alpha\ge 3$ be any very ample divisor on $X$ with
\begin{equation}
\label{cond}
2\beta>\mathrm{max}\{(\alpha-3)(g-1)+e\alpha,(g-1)+e\alpha\}.
\end{equation}
 Then, there are special Ulrich bundles with respect to $H_\alpha$ given by extensions
\[
0\to \mathcal O_X(\alpha C_0+(\beta+g-1)F)\otimes \pi^*\mathcal L_1\to E
\to \mathcal I_Z((2\alpha-2)C_0+(2\beta+g-1-e)F)\otimes\pi^*\mathcal L_2\to 0
\]
with $\mathcal L_1\in \mathrm{Pic}^0(C)$ a general line bundle, $\mathcal L_2=\mathfrak k\otimes \mathcal L_1^{-1}$ and $Z$ a general zero--dimensional subscheme of $X$ with $\ell(Z)=(\alpha-1)\left(\beta-\frac{e\alpha}{2}\right)$.
\end{thm}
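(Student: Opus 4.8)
The plan is to reduce the statement to an initialization condition via Lemma~\ref{lem:initialized} and then to follow the strategy of Proposition~\ref{prop:rank2alpha2}, the genuinely new features being that the coefficient $\alpha-2$ of $C_0$ in the quotient twist is now strictly positive, so the relevant cohomology has to be computed by pushing forward to $C$, and that the local freeness of $E$ becomes a real constraint. First I would record that any $E$ sitting in the displayed extension has $\det(E)\cong\mathcal O_X(K_X+3H_\alpha)$: the choice $\mathcal L_2=\mathfrak k\otimes\mathcal L_1^{-1}$ is made precisely so that the $\mathrm{Pic}^0(C)$--parts multiply to $\mathfrak k$, while the coefficients of $C_0$ and $F$ add up to $3\alpha-2$ and $3\beta+2g-2-e$, which are those of $K_X+3H_\alpha$. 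A direct computation then gives $c_2(E)=c_1(\text{sub})\cdot c_1(\text{quotient})+\ell(Z)$, and the value $\ell(Z)=(\alpha-1)\!\left(\beta-\tfrac{e\alpha}{2}\right)$ is exactly the one making $c_2(E)=\tfrac12 H_\alpha\cdot(5H_\alpha+3K_X)+2\chi(\mathcal O_X)$. Thus $E$ falls under Lemma~\ref{lem:initialized} and it suffices to prove that $E$ is initialized, i.e. $H^0(E(-H_\alpha))=0$.

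Before that, one must know that a \emph{locally free} such $E$ exists. By the Serre construction, a locally free extension of $\mathcal I_Z\otimes L_2$ by $L_1$ exists once $Z$ satisfies the Cayley--Bacharach property with respect to $|K_X+L_2-L_1|$. Here $K_X+L_2-L_1$ has $C_0$--coefficient $\alpha-4$ and $F$--coefficient $\beta+2g-2-2e$, so for $\alpha\ge 4$ its space of sections has dimension $\chi=(\alpha-3)\!\left(\beta+g-1-\tfrac{e\alpha}{2}\right)$ (and vanishes for $\alpha=3$, where $\alpha-4<0$). A general $Z$ of length $\ell(Z)$ satisfies Cayley--Bacharach as soon as $\ell(Z)>h^0(K_X+L_2-L_1)$, and unwinding this inequality reproduces exactly $2\beta>(\alpha-3)(g-1)+e\alpha$, the first clause of \eqref{cond}. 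I expect this verification of local freeness to be the main obstacle, since it is the one feature entirely absent in the cases $\alpha\le 2$.

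For the initialization, twisting the defining sequence by $-H_\alpha$ gives
\[
0\to\mathcal O_X((g-1)F)\otimes\pi^*\mathcal L_1\to E(-H_\alpha)\to\mathcal I_Z\otimes M\to 0,
\]
where $M:=\mathcal O_X((\alpha-2)C_0+(\beta+g-1-e)F)\otimes\pi^*\mathcal L_2$, so that $H^0(E(-H_\alpha))$ is squeezed between $H^0(\mathcal O_X((g-1)F)\otimes\pi^*\mathcal L_1)$ and $H^0(\mathcal I_Z\otimes M)$. The first vanishes for general $\mathcal L_1$, exactly as in Proposition~\ref{prop:rank2alpha2}, since it equals $H^0(\mathcal O_C((g-1)p)\otimes\mathcal L_1)$ for a general line bundle of degree $g-1$. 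For the second I would push forward: writing $X=\PP(\mathcal E)$ with $\mathcal E$ normalized so that $\mathcal O_X(C_0)=\mathcal O_{\PP(\mathcal E)}(1)$, the inequality $\alpha-2\ge 1$ gives $R^1\pi_*M=0$ and $\pi_*M=\mathrm{Sym}^{\alpha-2}\mathcal E\otimes\mathcal N$ with $\mathcal N$ a general line bundle of degree $\beta+g-1-e$, whence $H^i(M)=H^i(C,\pi_*M)$. Serre duality kills $h^2(M)$ because $K_X-M$ has negative $C_0$--coefficient, and filtering $\mathrm{Sym}^{\alpha-2}\mathcal E$ by the line bundles of degrees $0,-e,\dots,-(\alpha-2)e$ arising from the destabilizing sequence of $\mathcal E$ shows $h^1(M)=0$ once the smallest twist has degree $\ge g-1$, i.e. once $\beta\ge(\alpha-1)e$, which holds. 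Therefore $h^0(M)=\chi(M)=\ell(Z)$.

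Finally I would conclude that a general zero--dimensional $Z$ of length $\ell(Z)=h^0(M)$ imposes independent conditions on the complete linear system $|M|$, forcing $H^0(\mathcal I_Z\otimes M)=0$; equivalently, $\pi_*(\mathcal I_Z\otimes M)$ is a general elementary modification of $\mathrm{Sym}^{\alpha-2}\mathcal E\otimes\mathcal N$ of slope $g-1$, and the second clause $2\beta>(g-1)+e\alpha$ of \eqref{cond} is what guarantees that $Z$ can be taken general enough for this modification to land in the locus of bundles with $h^0=0$. Combining the two vanishings yields $H^0(E(-H_\alpha))=0$, so $E$ is initialized and, by Lemma~\ref{lem:initialized}, is a special Ulrich bundle with respect to $H_\alpha$.
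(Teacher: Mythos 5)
Your proposal is correct and follows the same skeleton as the paper's proof: reduce to initialization via Lemma \ref{lem:initialized} after checking $\det(E)=\mathcal O_X(K_X+3H_\alpha)$ and the $c_2$ identity, establish existence of a locally free $E$ through the Cayley--Bacharach property with exactly the same dimension count ($h^0=(\alpha-3)\left(g-1+\beta-\tfrac{e\alpha}{2}\right)\le\ell(Z)-1$, which unwinds to the first clause of \eqref{cond}), and then kill both end terms of $H^0$ of the sequence twisted by $-H_\alpha$. The one genuinely different ingredient is how you compute $h^0\bigl(\mathcal O_X((\alpha-2)C_0+(\beta+g-1-e)F)\otimes\pi^*\mathcal L_2\bigr)=\ell(Z)$: the paper writes this bundle as $K_X+\bigl(\alpha C_0+(\beta-g+1)F\bigr)$ and applies Kodaira vanishing, which requires $\alpha C_0+(\beta-g+1)F$ to be ample, i.e. $\beta>(g-1)+e\alpha$; you instead push forward to $C$ (using $R^1\pi_*=0$ since $\alpha-2\ge1$) and filter $\mathrm{Sym}^{\alpha-2}\mathcal E\otimes\mathcal N$ by line bundles whose general translates have degree at least $g-1$, which needs only $\beta\ge(\alpha-1)e$, automatic from very ampleness. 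This buys something real: the paper's Step 2 as written invokes $\beta>(g-1)+e\alpha$, which is strictly stronger than the second clause $2\beta>(g-1)+e\alpha$ actually stated in \eqref{cond}, so your route both avoids that discrepancy and shows the theorem only needs \eqref{cond} through its first clause.

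Two blemishes, neither fatal. First, in your Step 1 you assert $h^0(K_X+L_2-L_1)=\chi$ for $\alpha\ge4$ without justification; supply either the paper's argument (Kodaira vanishing, noting $(\alpha-2)C_0+(\beta-e)F$ is ample because $\beta>(\alpha-1)e$) or your own pushforward/filtration argument. Second, your closing attribution of the vanishing $H^0(\mathcal I_Z\otimes M)=0$ to the second clause of \eqref{cond} is spurious: once $h^0(M)=\ell(Z)$ is known, this is the standard fact that general points impose independent conditions on any linear system (add one general point at a time, each chosen off the base locus of the previous system), with no hypothesis on $\beta$ entering. As written, your proof never actually uses the second clause of \eqref{cond} --- which is a strength of your argument, not a defect, but you should state it as such rather than credit the hypothesis for the last step.
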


\proof
We proceed in several steps.

\medskip

\noindent
{\em Step1.}
We prove first that there are vector bundles in the extensions like in the statement. To this end, we need to verify that $Z$ verifies the Cayley--Bacharach property with respect to the linear system
\[
|\mathcal O_X(K_X+(2\alpha-2)C_0+(2\beta+g-1-e)F-\alpha C_0-(\beta+g-1)F)\otimes\pi^*\mathcal L_2\otimes\pi^*\mathcal L_1^{-1})|=
\]
\[
=|\mathcal O_X(K_X+(\alpha-2)C_0+(\beta-e)F)\otimes\pi^*(\mathcal L_2\otimes\mathcal L_1^{-1}))|.
\]

Since $\beta>\alpha e>(\alpha-1)e$ and $\alpha\ge 3$, the class $(\alpha-2)C_0+(\beta-e)F$ is very ample. Hence, by the Kodaira vanishing 
\[
h^0(\mathcal O_X(K_X+(\alpha-2)C_0+(\beta-e)F)\otimes\pi^*(\mathcal L_2\otimes\mathcal L_1^{-1}))=
\]
\[
=\chi(\mathcal O_X(K_X+(\alpha-2)C_0+(\beta-e)F)\otimes\pi^*(\mathcal L_2\otimes\mathcal L_1^{-1}))=
\]
\[
=(\alpha-3)\left(g-1+\beta-\frac{e\alpha}{2}\right).
\]

The hypothesis (\ref{cond}), i.e. the condition $2\beta>(\alpha-3)(g-1)+e\alpha$ implies
\[
h^0(\mathcal O_X(K_X+(\alpha-2)C_0+(\beta-e)F)\otimes\pi^*(\mathcal L_2\otimes\mathcal L_1^{-1}))\le \ell(Z)-1,
\]
and it follows that for a general $Z$ and any $x\in\mathrm{supp}(Z)$, we have
\[
h^0(\mathcal I_{Z\setminus \{x\}}(K_X+(\alpha-2)C_0+(\beta-e)F)\otimes\pi^*(\mathcal L_2\otimes\mathcal L_1^{-1}))=0.
\]
Hence $Z$ trivially satisfies the Cayley--Bacharach property with respect to the linear system
\[
|\mathcal O_X(K_X+(\alpha-2)C_0+(\beta-e)F)\otimes\pi^*(\mathcal L_2\otimes\mathcal L_1^{-1}))|
\]
and therefore,  in any extension as in the statement there are rank-two vector bundles. It is clear that the determinant of such a bundle equals $\mathcal O_X(K_X+3H_\alpha)$. Moreover, a direct computation shows that the second Chern class is
\[
-\frac{\alpha(5\alpha-3)}{2}e+\beta(5\alpha -3)+(3\alpha-2)(g-1)=
\frac{1}{2}H_\alpha\cdot (5H_{\alpha}+3K_X)+2\chi(\mathcal O_X).
\]
So,  it is Ulrich if and only if it is initialized.

\medskip

\noindent
{\em Step 2.}
We prove that the rank two vector bundles we have constructed  are initialized.  We consider the  short exact sequence
\[
0\to H^0(\mathcal O_X((g-1)F)\otimes \pi^*\mathcal L_1)\to H^0(E(-H_\alpha))
\to H^0(\mathcal I_Z((\alpha-2)C_0+(\beta+g-1-e)F)\otimes\pi^*\mathcal L_2).
\]

The genericity of $\mathcal L_1$ ensures that $H^0(\mathcal O_X((g-1)F)\otimes \pi^*\mathcal L_1)=0$. The hypothesis $\beta>(g-1)+e\alpha$ implies that the class $\alpha C_0+(\beta-g+1)F$ is very ample. Thus,   Kodaira vanishing implies
\[
h^0(\mathcal O_X((\alpha-2)C_0+(\beta+g-1-e)F)\otimes\pi^*\mathcal L_2)=
\chi(\mathcal O_X((\alpha-2)C_0+(\beta+g-1-e)F)\otimes\pi^*\mathcal L_2)=
\]
\[
=-\frac{\alpha(\alpha-1)e}{2}+(\alpha-1)\beta=\ell(Z).
\]
Therefore, for a general $Z$ we have
\[
h^0(\mathcal I_Z((\alpha-2)C_0+(\beta+g-1-e)F)\otimes\pi^*\mathcal L_2)=0
\]
which finishes the proof.
\endproof

\begin{rmk}
The condition (\ref{cond}) is a very mild one. Besides, if $g=0$ or $g=1$, it is automatically satisfied.
\end{rmk}

\begin{rmk}
A version of Theorem \ref{thm:rank2}, with an almost verbatim proof, yields to existence of rank-two Ulrich bundles on Veronese surfaces, which is the content of  \cite{ES03} Proposition 5.9.
\end{rmk}

\begin{rmk}
\label{rmk:stability}
While the rank two vector bundles of Proposition \ref{prop:rank2alpha1} are strictly semistable, the rank two bundles from Proposition \ref{prop:rank2alpha2} and Theorem \ref{thm:rank2} are stable with respect to the given polarizations. Indeed, they are semistable and, if they were strictly semistable the the maximal destabilazing sequences would provide us with Ulrich line bundles, \cite[Theorem 2.9 (b)]{CH12}, which cannot exist, Theorem \ref{thm:rank1}.
\end{rmk}

Theorem \ref{thm:rank2} and \cite{Kim16} immediately yield the following

\begin{cor}
\label{cor:blowup}
Let $X\stackrel{\sigma}{\to}\mathbb P^2$ be the blowup of $\mathbb P^2$ at $n\ge 1$ general points, denote $E_1,\ldots E_n$ the exceptional divisors and let $H$ be an ample bundle on $X$ of type $\sigma^*\mathcal O_{\mathbb P^2}(k)-\sum E_i$. Then there exists a rank-2 Ulrich bundle on $X$ with respect to $H$.
\end{cor}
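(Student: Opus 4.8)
The plan is to reduce everything to the ruled-surface results already established in this section and then to transport them across a blow-up by means of \cite{Kim16}. The key observation is that, since $n\ge 1$, the morphism $\sigma$ factors through the Hirzebruch surface $\mathbb P^1$-bundle $\mathbb F_1$: blowing up the first point gives $\mathbb F_1\to\mathbb P^2$, which is a geometrically ruled surface over $C=\mathbb P^1$ with invariant $e=1$ and genus $g=0$, and $X$ is then the blow-up $\rho\colon X\to\mathbb F_1$ of $\mathbb F_1$ at the remaining $n-1$ general points. I would first record the dictionary on $\mathbb F_1$: taking $C_0=E_1$ for the $(-1)$-section and $F=\sigma^*\mathcal O_{\mathbb P^2}(1)-E_1$ for the ruling, one has $\sigma^*\mathcal O_{\mathbb P^2}(1)=C_0+F$, so the polarization restricts to $H_0\equiv (k-1)C_0+kF$, i.e. $\alpha=k-1$ and $\beta=k$. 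Ampleness of $H$ forces $k\ge 2$, and indeed $\beta=k>\alpha e=k-1$, so $H_0$ is a polarization of exactly the type treated here.

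Next I would apply the existence statement on $\mathbb F_1$ matching the value of $\alpha$: Proposition \ref{prop:rank2alpha1} for $k=2$, Proposition \ref{prop:rank2alpha2} for $k=3$, and Theorem \ref{thm:rank2} for $k\ge 4$. In the last case the numerical hypothesis (\ref{cond}) has to be checked, but with $g=0$ it holds automatically (this is the content of the remark following Theorem \ref{thm:rank2}); explicitly, for $e=1$, $\alpha=k-1$, $\beta=k$ it reads $2k>\max\{3,\,k-2\}$, which is clear. Thus $(\mathbb F_1,H_0)$ always carries a rank-two special Ulrich bundle. For $n=1$ this already proves the corollary, since then $X=\mathbb F_1$ and $H=H_0$.

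For $n\ge 2$ I would feed this bundle into \cite{Kim16}. Pulling $H_0$ back along $\rho$ and twisting down by the new exceptional divisors, the polarization on $X$ is exactly $H=\rho^*H_0-\sum_{i=2}^{n}E_i=\sigma^*\mathcal O_{\mathbb P^2}(k)-\sum_{i=1}^{n}E_i$, which is of the precise shape to which the transfer principle of \cite{Kim16} applies: from a rank-two Ulrich bundle on the ruled surface $(\mathbb F_1,H_0)$ it produces a rank-two Ulrich bundle on the blow-up of $\mathbb F_1$ at the $n-1$ general points, for this very polarization. This yields the desired rank-two Ulrich bundle on $(X,H)$.

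The one place requiring genuine care is the bookkeeping linking the two descriptions of $X$: I must confirm that $\sigma^*\mathcal O_{\mathbb P^2}(k)-\sum E_i$ coincides with $\rho^*H_0$ minus the later exceptional classes, each with coefficient exactly $-1$, and that this polarization simultaneously meets the (very) ampleness needed for the ruled-surface results and the generality-of-points hypotheses under which \cite{Kim16} preserves Ulrichness. Once these are matched, the two inputs combine immediately; all of the substantive work lives in Theorem \ref{thm:rank2} and in \cite{Kim16}, not in the corollary itself.
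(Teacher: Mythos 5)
Your proposal is correct and takes essentially the same route as the paper, whose own ``proof'' is just the one-line deduction preceding the corollary: view $X$ as an iterated blow-up of the ruled surface $\mathbb F_1$ (the blow-up of $\mathbb P^2$ at one point, carrying the induced polarization $(k-1)C_0+kF$), apply the rank-two existence results there, and transfer across the remaining $n-1$ blow-ups via \cite{Kim16}. If anything you are more careful than the paper, since you also invoke Propositions \ref{prop:rank2alpha1} and \ref{prop:rank2alpha2} to cover $k=2,3$, where the hypothesis $\alpha\ge 3$ of Theorem \ref{thm:rank2} fails, and you verify condition (\ref{cond}) explicitly for $g=0$.
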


\end{document}